\documentclass[a4paper,12pt]{article}
\usepackage{amsfonts,amsmath,array}
\usepackage{amssymb}
\usepackage{amsthm}
\usepackage[T2A]{fontenc}
\usepackage[dvips]{color}
\usepackage[koi8-r]{inputenc}
\usepackage[english]{babel}
\usepackage{amscd}
\usepackage{graphicx}
\usepackage{natbib}
\setcitestyle{authoryear,open={[},close={]}}
\bibliographystyle{plainnat}

\newtheorem{theorem}[]{Theorem}

\newtheorem{cor}{Corollary}
\newtheorem{rem}{Remark}
\newtheorem{definition}{Definition}

\begin{document}
\title
{On the metabelian property of quotient groups of solvable groups of orientation-preserving homeomorphisms of the line
\footnotemark[1]\footnotetext[1]{The reported study was partially funded by RFBR according to the research project 19-01-00147}}
\author{Levon Beklaryan}
\date{Central Economics and Mathematics Institute RAS,\\
Moscow Institute of Physics and Technology\\
Moscow, Russia\\
lbeklaryan@outlook.com, beklar@cemi.rssi.ru}
\maketitle

\begin{abstract}
For the class of solvable groups of homeomorphisms of the line preserving orientation and containing a freely acting element, we establish the metabelianity of the quotient group $G/H_G$, where the elements of the normal subgroup $H_G$ are stabilizers of the minimal set. This fact is an important element in the classification theorem, used, in particular, in the study of the Thompson's group $F$.
\end{abstract}

\section{Introduction}
In what follows, by $Homeo_+(\mathbb X),\mathbb X=\mathbb R,\mathbb S^1$, we denote the group of all orientation-preserving homeomorphisms of $\mathbb X$. We define an important subset $G^S$ of the group $G \subseteq  Homeo_+(\mathbb X)$ as the union of stabilizers
\begin{equation*}
G^S=\bigcup_{t\in \mathbb X}St_G(t).
\end{equation*}
The set $G^s$ may not be a group.

For the group $G\subseteq Homeo_+(\mathbb X),\mathbb X=\mathbb R,\mathbb S^1$, the minimal set is an important topological characteristic.
\begin{definition}
The minimal set of the group $G\subseteq Homeo(\mathbb X)$ is a closed $G$-invariant subset of $\mathbb X$ that does not contain proper closed $G$-invariant subsets. If there is no nonempty minimal set, then by definition we assume that it is empty.
$\blacksquare$
\end{definition}

The importance of minimal sets is that it define supports of metric invariants.

Another major topological characteristic is the set:
\begin{equation*}
Fix\,G^S=\{t\in \mathbb X: \quad \forall g\in G^S, \quad g(t)=t\}.
\end{equation*}

\begin{theorem}[\citep{Beklaryan.2004_RMS}]\label{t1}
Let $G\subseteq Homeo_+(\mathbb X)$. Then one of the following mutually exclusive statements is true:
\begin{itemize}
\item[{a)}] any minimal set is discrete and belongs to the set $Fix~G^S$, and the set $Fix~G^S$ consists of the union of the minimal sets $E_\alpha,\alpha\in \mathcal A$, i.e. $Fix~G^S=\bigcup_{\alpha\in \mathcal A} E_\alpha$;
\item[{b)}] the minimal set is unique; it is a perfect nowhere dense subset of $\mathbb R$; it is contained in the closure of the orbit $\overline{G(t)}$ of an arbitrary point $t\in \mathbb X$ and is denoted by $E(G)$;
\item[{c)}] the minimal set coincides with $\mathbb X$ and is also denoted by $E(G)$;
\item[{d)}] the minimal set is empty (in the case $\mathbb X=\mathbb R$).
\end{itemize}
$\blacksquare$
\end{theorem}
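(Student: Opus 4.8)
The plan is to work with an arbitrary $G\subseteq Homeo_+(\mathbb{X})$, to analyze one minimal set $M$ at a time, and to lean throughout on two elementary observations. First, since every $g\in G$ is a homeomorphism, the derived set $M'$, the closure $\overline{\mathrm{int}\,M}$ of the interior, and the boundary $\partial M$ are all closed and $G$-invariant, hence by minimality each equals $\emptyset$ or $M$. Second, for any $t\in\mathbb{X}$ the set $\overline{G(t)}\cap M$ is closed, $G$-invariant and contained in $M$, so it too equals $\emptyset$ or $M$. From the first observation the topological type of a minimal set falls out at once: $M'=\emptyset$ gives $M$ discrete; if $M'=M$ (so $M$ is perfect), then either $\overline{\mathrm{int}\,M}=\emptyset$, giving $M$ nowhere dense, or $\overline{\mathrm{int}\,M}=M$, and in the latter case $\partial M=M$ is impossible (it would force $\mathrm{int}\,M=\emptyset$), so $\partial M=\emptyset$, $M$ is clopen, and $M=\mathbb{X}$ by connectedness. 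This yields the trichotomy ``discrete / perfect nowhere dense / $\mathbb{X}$'' behind cases (a)--(c).

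For cases (b) and (c) I would upgrade this to uniqueness via the claim that, when $M$ is perfect, $M\subseteq\overline{G(t)}$ for \emph{every} $t$. By the second observation it suffices to exclude $\overline{G(t)}\cap M=\emptyset$; this is vacuous when $M=\mathbb{X}$ (then $\overline{G(t)}\cap M\ni t$), and in the nowhere dense case it would force $t$ into a bounded component $I=(a,b)$ of $\mathbb{X}\setminus M$ (a perfect minimal set has no extreme point, hence on $\mathbb{R}$ is unbounded, and on $\mathbb{S}^1$ the complementary arcs are proper). Since $M$ is infinite, no orbit inside $M$ is finite; since an orientation-preserving homeomorphism carrying $I$ onto itself fixes its endpoints, the $G$-orbit of $I$ must be infinite; and since $\{g(a):g\in G\}=\overline{G(a)}=M$ is dense in the perfect set $M$, for a prescribed $m\in M$ one may pick $g_n$ with $g_n(a)\to m$ and the $g_n(a)$ pairwise distinct. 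Then the gaps $g_n(I)$ are pairwise distinct and disjoint with left endpoints tending to $m$, an order argument forces their lengths to $0$, and so $g_n(t)\in g_n(I)$ gives $g_n(t)\to m$ --- contradicting $\overline{G(t)}\cap M=\emptyset$. Uniqueness then follows: for any minimal set $M'$ and $t'\in M'$ one has $M\subseteq\overline{G(t')}\subseteq M'$, so $M=M'$. I expect this ``shrinking gaps'' step to be the main obstacle; in particular, obtaining ``lengths $\to 0$'' on the non-compact line $\mathbb{R}$ from the order structure of $M$ near $m$, rather than from finiteness of total length as on $\mathbb{S}^1$, needs some care.

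For case (a) I would assume a minimal set exists and all minimal sets are discrete. First, $Fix\,G^S=\bigcap_{g\in G^S}Fix(g)$ is closed and, because $G^S$ is conjugation-stable, $G$-invariant. A discrete minimal set, being closed and discrete in $\mathbb{X}$ with no $G$-fixed extreme point, is either a single $G$-fixed point or --- on $\mathbb{R}$ --- order-isomorphic to $\mathbb{Z}$ with $G$ acting through a translation-number epimorphism $\phi_M\colon G\twoheadrightarrow\mathbb{Z}$ (on $\mathbb{S}^1$ the analogue is a finite cyclic set with $G\twoheadrightarrow\mathbb{Z}/n$), whose kernel $H_M$ is the pointwise stabilizer of $M$. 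Any $g\notin H_M$ moves every point of $M$ and hence, by monotonicity across the gaps between consecutive points of $M$, has no fixed point on $\mathbb{X}$ at all, so $g\notin G^S$; therefore $G^S\subseteq H_M$ and $M\subseteq Fix\,G^S$, giving $\bigcup_\alpha E_\alpha\subseteq Fix\,G^S$. For the reverse inclusion let $t\in Fix\,G^S$. If some minimal set is a single point $\{p\}$, then $p$ is $G$-fixed, so no $g\in G$ is fixed-point-free, and any $g$ moving $t$ would lie in $G^S$ and fix $t$ --- absurd; hence $t$ is $G$-fixed. Otherwise fix a minimal set $M\cong\mathbb{Z}$: since $H_M\subseteq G^S$ fixes all of $Fix\,G^S$ and $St_G(t)\subseteq G^S$ fixes $Fix\,G^S\supseteq M$, comparing gives $St_G(t)=H_M$, so $G(t)\cong G/H_M$ is a single orbit on which $G$ acts through $\phi_M$ by translations; the element realizing the unit translation acts freely on $G(t)$, so $G(t)$ is order-isomorphic to $\mathbb{Z}$, and it must be unbounded in both directions (otherwise that element would fix a finite limit point of $\overline{G(t)}\subseteq Fix\,G^S$ while moving $t$), hence closed and discrete. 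In every case $G(t)=\overline{G(t)}$ carries a free transitive $\mathbb{Z}$- (or $\mathbb{Z}/n$-) action, so it has no proper nonempty closed invariant subset and is a minimal set containing $t$. Thus $Fix\,G^S=\bigcup_\alpha E_\alpha$, and every minimal set, lying in $Fix\,G^S$, is discrete.

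Finally, existence and exhaustion: on the compact circle $\mathbb{S}^1$, Zorn's lemma (a decreasing chain of nonempty closed invariant sets has nonempty intersection by compactness) yields a minimal set, so one of (a), (b), (c) holds; on $\mathbb{R}$, either a nonempty minimal set exists --- placing us in exactly one of (a)--(c) by the above --- or none does, which is (d). Mutual exclusivity is then clear: a minimal set has exactly one of the three topological types, a perfect-nowhere-dense or a full minimal set is unique and admits no companion (the latter has no proper closed invariant subset; the former is contained in every other minimal set by Step 2, hence equal to it), and (d) is the remaining alternative.
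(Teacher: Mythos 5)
The paper states Theorem~\ref{t1} without proof, importing it from \citep{Beklaryan.2004_RMS}, so there is no internal argument to compare against; your proof is the standard one for the minimal-set trichotomy (invariance of $M'$, $\overline{\mathrm{int}\,M}$, $\partial M$ and of $\overline{G(t)}\cap M$, plus the translation-number analysis in the discrete case) and is correct. The step you flagged as delicate --- gap lengths tending to zero on the non-compact line $\mathbb{R}$ --- is in fact unproblematic: since the left endpoints $g_n(a)$ converge to $m$, the pairwise disjoint gaps $g_n(I)$ eventually lie in a fixed bounded neighbourhood of $m$, and infinitely many disjoint intervals in a bounded set cannot all have length bounded below.
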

In the case of a {\it discrete minimal set}, we set $E(G)=Fix~sG^S$.

We define one important subgroup of the initial group associated with the minimal set.

\begin{definition}
For the group $G\subseteq Homeo_+(\mathbb X)$, the normal subgroup $H_G$ is defined as follows:
\begin{itemize}
\item[{1)}] if the minimal set is not empty and not discrete, then we set
$$
H_G=\left\{h\in G:\quad E(G)\subseteq Fix<h>\right\};
$$
\item[{2)}] if the minimal set is nonempty and discrete, then we set $H_G=G^S$ (discreteness of the minimal set implies the nonemptiness of the set $Fix~G^S$, the nonemptiness of $Fix~G^S$ implies that $G^S$ is normal subgroup);
\item[{3)}] if the minimal set is empty, then we set $H_G=<e>$.
\end{itemize}
$\blacksquare$
\end{definition}

Let a Radon measure $\mu$ on $\mathbb X$ and a group $G\subseteq Homeo_+(\mathbb X)$ are given. For each element $g \in G$ we determine a new measure $g_*\mu$ defined by the rule: for any Borel set $B$
$$
g_*\mu(B)=\mu(g^{-1}(B)).
$$
If for any $g \in G$ the relation $g_*\mu=c_g\mu,c_g>0$, holds, then such a measure is called projectively invariant. If the equality $c_g=1$ holds for any $g \in G$ then such a measure is called invariant.

Metric invariants are of great importance in the classification of groups of homeomorphisms of the line and the circle. Therefore, the formulation of the criteria for the existence of metric invariants in terms of various characteristics of the group plays an important role.

\section{Main results}
For a group with a freely acting element, we formulate the criterion for the existence of a projectively invariant measure in the form of the absence of some special subgroup with two generators.
\begin{theorem}[\citep{Beklaryan.2014_Sb}]\label{t2}
Let $G\subseteq Homeo_+(\mathbb R)$ and there exists a freely acting element $\bar{g}\in G$ ($Fix~\bar{g}=\emptyset$). For the existence of a projectively invariant measure, it is necessary and sufficient that the group $G$ does not contain the subgroup $\Lambda \subseteq G, \Lambda=<p,q>$, in which the element $p$ is freely acting, and the elements $p,q$ with respect to the points $t_0,t_1 \in E(G), t_0<t_1$, satisfy the conditions:
\begin{align*}
&{q}(t_0)=t_0,\quad q(t_1)=t_1,\quad {q}(t)> t,\quad t\in (t_0,t_1);\\
&p(t_0)\in (t_0,t_1),\quad p(t)>t,\quad t\in \mathbb R.
\end{align*}
$\blacksquare$
\end{theorem}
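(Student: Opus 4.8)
\emph{Proof plan.} Necessity is routine and the substance lies in sufficiency. \emph{Necessity.} Suppose $G$ carries a nonzero projectively invariant Radon measure $\mu$ and, for contradiction, a subgroup $\Lambda=\langle p,q\rangle\subseteq G$ as described. I would first show $\mu((t_0,t_1))=0$: since $q(t_0)=t_0$, $q(t_1)=t_1$ and $q(t)>t$ on $(t_0,t_1)$, for any $s\in(t_0,t_1)$ the iterates $q^n(s)$ increase to $t_1$ and decrease to $t_0$ as $n\to\pm\infty$, so $(t_0,t_1)=\bigsqcup_{n\in\mathbb Z}[q^n(s),q^{n+1}(s))$ and hence $\mu((t_0,t_1))=\bigl(\sum_{n\in\mathbb Z}c_q^{-n}\bigr)\mu([s,q(s)))$; a two-sided geometric series of nonnegative terms is finite only if every term vanishes, and $\mu((t_0,t_1))<\infty$, so $\mu((t_0,t_1))=0$. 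Next, as $p$ is freely acting with $p(t)>t$ one has $\mathbb R=\bigsqcup_{n\in\mathbb Z}[p^n(t_0),p^{n+1}(t_0))$, and because $p(t_0)\in(t_0,t_1)$ we get $\mu((t_0,p(t_0)))\le\mu((t_0,t_1))=0$, so the $n$-th block carries mass $c_p^{-n}\mu(\{t_0\})$ and $\mu$ is concentrated on the $p$-orbit $S$ of $t_0$. If $\mu(\{t_0\})=0$ then $\mu=0$, which is excluded; otherwise $\operatorname{supp}\mu=S$ is closed, discrete and $G$-invariant (since $g_*\mu=c_g\mu$ forces $g(\operatorname{supp}\mu)=\operatorname{supp}\mu$), so by Theorem~\ref{t1} the minimal set is discrete and $E(G)=Fix~G^S$. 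But then $q(t_0)=t_0$ gives $q\in G^S$, so $q$ fixes all of $Fix~G^S\ni p(t_0)$ (the set $Fix~G^S$ being $G$-invariant), contradicting $q(p(t_0))>p(t_0)$.

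\emph{Sufficiency --- reductions.} Assume now $G$ has no subgroup of type $\Lambda$; I want to produce a nonzero projectively invariant measure. By Theorem~\ref{t1} one first clears away the degenerate possibilities: if the minimal set is discrete, then $Fix~G^S$ is a closed discrete $G$-invariant set on which the counting measure is invariant; if it is empty there is no $E(G)$ and the measure is built directly along $\bar g$. If the minimal set is a Cantor set $E(G)$, collapse each bounded component of $\mathbb R\setminus E(G)$ to a point: this gives a monotone continuous surjection $\pi\colon\mathbb R\to\mathbb R$ and an induced minimal action of $G$ that still has a freely acting element and still contains no subgroup of type $\Lambda$ (one downstairs would lift upstairs), and a projectively invariant measure downstairs pulls back through $\pi$. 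So it remains to handle the case $E(G)=\mathbb R$, where the action is minimal and, after conjugation, $\bar g$ is the unit translation $x\mapsto x+1$.

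\emph{Sufficiency --- construction and the main obstacle.} It suffices to build a monotone map $\tau\colon\mathbb R\to\mathbb R$ semiconjugating $G$ onto a subgroup of $\mathrm{Aff}_+(\mathbb R)$ (that is, with $\tau\circ g=\rho(g)\circ\tau$ for an affine representation $\rho$), since then the Lebesgue--Stieltjes measure $d\tau$ is projectively invariant. I would argue contrapositively: if no such measure exists, the natural construction of $\tau$ --- start from the $\bar g$-fundamental domain $[0,1]$, propagate prospective lengths to the intervals with endpoints in the dense orbit $G(0)$ compatibly with the cocycle $g\mapsto c_g$ that affineness forces, and pass to the limit --- must break down, and I claim the breakdown can be exhibited as an element $q\in G$ with two fixed points $t_0<t_1$ in $E(G)$ and $q(t)>t$ on $(t_0,t_1)$ (a bump, obtained by refining a contracting element that the failure of the construction supplies), together with a freely acting element $p$ produced from $\bar g$ with $p(t_0)\in(t_0,t_1)$ and $p(t)>t$, which transports the bump over all of $\mathbb R$ and makes any consistent length assignment impossible; such $p,q$ generate a subgroup of the forbidden type, against the hypothesis. \textbf{The main obstacle} is precisely this sufficiency argument, in its two parts: (i) converting the nonexistence of the measure into a genuinely nonlinear feature of the action and refining it to a bump with a prescribed fixed pair in $E(G)$, and (ii) the positioning step --- producing the freely acting $p$ with $p(t_0)\in(t_0,t_1)$, $p(t)>t$ --- which leans on minimality of the action and on the freedom to conjugate $\bar g$; one must in addition check that the reductions above preserve both the no-$\Lambda$ hypothesis and the existence of the measure.
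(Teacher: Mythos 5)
The paper itself offers no proof of Theorem~\ref{t2} --- it is imported wholesale from \citep{Beklaryan.2014_Sb} --- so your attempt can only be judged on its own terms. The necessity half is essentially complete and correct: tiling $(t_0,t_1)$ by the $q$-iterates of a fundamental interval forces $\mu((t_0,t_1))=0$ via the divergent two-sided geometric series, tiling $\mathbb R$ by the $p$-iterates of $[t_0,p(t_0))$ then concentrates $\mu$ on the discrete $p$-orbit $S$ of $t_0$, and since $\overline{G(t_0)}\subseteq S$ rules out cases b) and c) of Theorem~\ref{t1} (and $t_0\in E(G)$ rules out d)), one lands in case a), where $q\in G^S$ must fix $p(t_0)\in Fix~G^S$, contradicting $q(p(t_0))>p(t_0)$. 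That argument would survive refereeing with only cosmetic polishing.

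The genuine gap is the sufficiency direction, and it is not a small one: it is the entire content of the theorem. Your reductions (discrete minimal set, collapsing the gaps of a Cantor minimal set, normalizing to a minimal action on $\mathbb R$) are standard and unobjectionable, but the core step is merely restated, not proved. Saying that "the natural construction of $\tau$ must break down and the breakdown can be exhibited as a bump $q$ together with a suitably positioned freely acting $p$" is exactly the assertion to be established; nothing in your text explains how the failure of a consistent length assignment produces an element of $G$ with precisely the normalization $q(t_0)=t_0$, $q(t_1)=t_1$, $q>\mathrm{id}$ on $(t_0,t_1)$ with $t_0,t_1\in E(G)$, and in particular why the obstruction can always be localized to a single element with exactly this fixed-point picture rather than to some more diffuse combinatorial incompatibility. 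The positioning step for $p$ is also not automatic: replacing $\bar g$ by powers moves $p(t_0)$ away from $t_0$, not toward it, so producing a freely acting $p$ with $p(t_0)\in(t_0,t_1)$ requires a genuine argument exploiting minimality (e.g.\ conjugating $\bar g$ so as to shrink the fundamental domain near $t_0$), which you have named but not supplied. As written, sufficiency is a program, not a proof; to close it you would need to either reproduce the construction of \citep{Beklaryan.2014_Sb} or give an independent derivation of steps (i) and (ii) of what you call the main obstacle.
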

Below are graphs of the homeomorphisms $p$ and $q$ (see fig.~\ref{fig:pq}).

\begin{figure}[!ht]
\begin{center}
\includegraphics[width=0.75\textwidth]{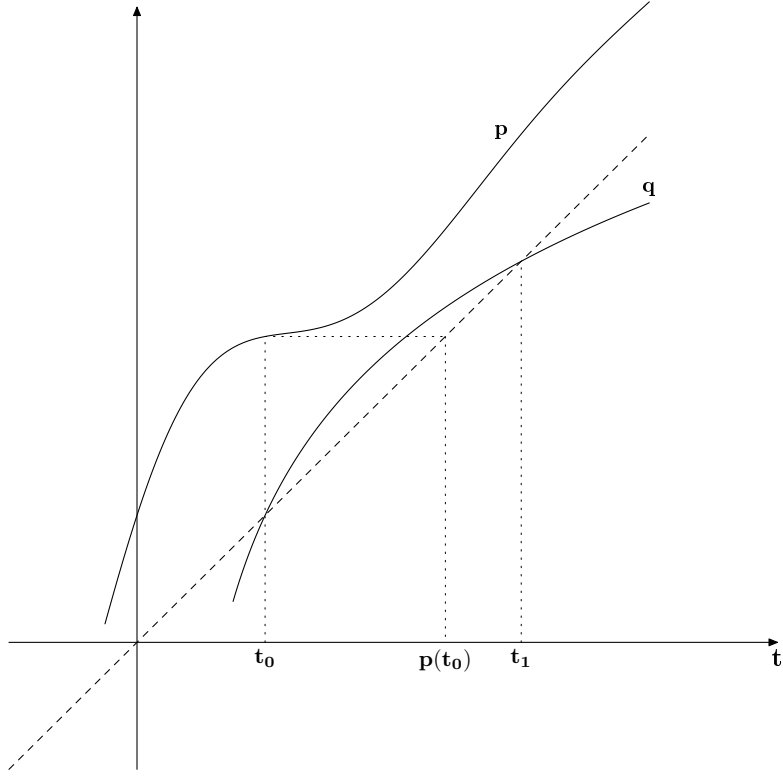}
\caption{Graphs of homeomorphisms $p$ and $q$}
\label{fig:pq}
\end{center}
\end{figure}

In what follows, by $\mathcal L$ we denote the class of groups that contain a freely acting element and do not contain the subgroup $\Lambda=<p,q>$ with two generators from Theorem~\ref{t2}.

\begin{rem}\label{z1}
Every group $G \notin \mathcal L$ contains a free subsemigroup with two generators.
\end{rem}
\begin{proof}
By the condition $G \notin \mathcal L$, for the group $G$ there is a subgroup $ \Lambda=<p,q>$ with two generators described in Theorem~\ref{t2}. Then there is a positive integer $k$ and a point $\bar{t}\in (t_0,t_1)$ such that $p(\bar{t})=q^k(\bar{t})$ and $p(t)>q^k(t), t \in (\bar{t},t_1]$. We form the element $pq^{-k}$. For such an element, the conditions $pq^{-k}(\bar{t})=\bar{t}, pq^{-k}({t})>t, t \in (\bar{t},t_1]$ are fulfilled. Then it follows from \citep{Solodov.1983} that the group $<pq^{-k},q^{-k}>$ contains a free subsemigroup.
\end{proof}

In \citep{Guelman.2016}, a criterion for the existence of a projectively invariant measure is formulated in terms of local subexponentiality with respect to group growth.

\begin{definition}
A group is called locally subexponential if, for any finitely generated subgroup, growth is subexponential.
$\blacksquare$
\end{definition}
By $\mathcal S$ we denote the class of groups for which the following finite normal filtering takes place
$$
<e>\lhd G^{r+1}\lhd G^r \lhd \ldots \lhd G^1\lhd G^0,
$$
where all factors $G_{i-1}/G_i, i=1,\ldots,r+1$, are locally subexponential.

There is a condition of the existence of a projectively invariant measure.
\begin{theorem}[\citep{Guelman.2016}]\label{t3}
Let G be a group in $\mathcal S$ that is acting on the line by order-preserving homeomorphisms. Assume that there is $T \in G$ having no fixed points. Then there is a Radon measure $\mu$ on the line which is quasi-invariant (projectively invariant) by $G$.
$\blacksquare$
\end{theorem}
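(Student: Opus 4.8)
\emph{Plan.} The natural route is to reduce everything to Theorem~\ref{t2}: it suffices to prove that a group $G\in\mathcal S$ possessing a freely acting element $T$ already lies in the class $\mathcal L$, i.e.\ contains no subgroup $\Lambda=\langle p,q\rangle$ of the forbidden type; Theorem~\ref{t2} then produces the measure. I would argue by induction on the length of the normal filtration $\langle e\rangle\lhd G^{r+1}\lhd\cdots\lhd G^0=G$ that witnesses $G\in\mathcal S$, the base case being that $G$ is itself locally subexponential. In the base case, if $G\notin\mathcal L$ then by Remark~\ref{z1} the group $G$ contains a free subsemigroup on two generators; the finitely generated subgroup it generates then has exponential growth, contradicting local subexponentiality. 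Hence $G\in\mathcal L$ and we are done.

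For the inductive step put $N=G^1\lhd G$, so that $Q=G/N$ is locally subexponential and $N\in\mathcal S$ carries a shorter filtration. The main case is when $N$ itself contains a freely acting element. Then the induction hypothesis gives $N$ a projectively invariant Radon measure $\nu$, and the crucial observation is that, by the classification of Theorem~\ref{t1}, when $N$ acts with a non-discrete minimal set such a $\nu$ is unique up to a positive scalar: the monotone map $x\mapsto\nu([x_0,x])$ semiconjugates the $N$-action to an action on an interval preserving Lebesgue measure up to scaling, hence to an action by affine maps, a freely acting element of $N$ goes to a nonzero translation, and translation invariance pins $\nu$ down up to scaling. Now normality enters: for every $g\in G$ the pushforward $g_*\nu$ is again a projectively $N$-invariant Radon measure, so $g_*\nu=\lambda(g)\nu$ for some $\lambda(g)>0$, and thus $\nu$ is projectively invariant under all of $G$.

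The remaining case, where $N=G^1$ has no freely acting element, is the main obstacle, since the uniqueness argument is then unavailable — indeed projectively invariant measures are genuinely non-unique in the degenerate cases (for $x\mapsto 2x$ both Lebesgue measure and the Dirac mass at $0$ are projectively invariant). Here one must use Theorem~\ref{t1} directly: every element of $N$ fixes a point, so the minimal set of $N$ is discrete or empty, or $N$ has a global fixed point. If $\mathrm{Fix}\,N\neq\emptyset$ it is a proper closed $G$-invariant set (the free element $T$ forbids a global fixed point of $G$), and $G$ permutes the components of its complement, yielding a reduction; in the discrete or empty cases $N$ preserves a Radon measure with canonical $N$-support, which one again tries to promote via normality, or else one collapses the $N$-orbit structure to obtain an action of the locally subexponential quotient $Q$ on a line and applies the base case there. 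Making this precise — tracking how the forbidden configuration $\Lambda\subseteq G$ interacts with the extension $1\to N\to G\to Q\to 1$, given that neither "contains a free subsemigroup" nor "has a projectively invariant measure" is well behaved under extensions — is the technical heart of the argument and is exactly where Beklaryan's full structure theory is needed. I note that one could instead try to work directly with the free subsemigroup and the explicit ping-pong configuration of Solodov furnished by Remark~\ref{z1}, pushing it through the filtration; but the difficulty is identical, because the image of a free subsemigroup in $Q$ need not be free and a free subsemigroup in $G$ need not produce one inside $N$, which is precisely why the measure-uniqueness route combined with Theorem~\ref{t1} seems the cleanest way through.
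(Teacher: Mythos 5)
First, note that the paper does not prove this statement at all: Theorem~\ref{t3} is quoted from \citep{Guelman.2016} as an external ingredient, so there is no internal proof to compare yours against. Judged on its own terms, your plan has a correct and complete base case: if $G$ is locally subexponential and $G\notin\mathcal L$, then Remark~\ref{z1} produces a free subsemigroup on two generators, hence a finitely generated subgroup of exponential growth, a contradiction; Theorem~\ref{t2} then supplies the measure. (Be aware, though, that this makes Theorem~\ref{t2} a prerequisite of Theorem~\ref{t3}, reversing the logical order of the paper, where Theorem~\ref{t3} is the imported input and Remark~\ref{z2} its consequence; that is legitimate but should be stated.)

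The inductive step, however, has two genuine gaps. First, in your ``main case'' the whole mechanism for promoting invariance from $N$ to $G$ via normality is the uniqueness of the projectively $N$-invariant Radon measure up to a positive scalar, and that uniqueness genuinely fails when the minimal set of $N$ is discrete: for $N$ generated by a single translation, Lebesgue measure and the purely atomic measures $\sum_{n\in\mathbb Z}\delta_{x_0+n}$ are all invariant Radon measures, so $g_*\nu$ need not be proportional to $\nu$. You correctly restrict the uniqueness claim to the non-discrete case, but then the configuration ``$N$ contains a freely acting element and has a discrete minimal set'' falls into neither of your two cases and is left unhandled. Second, the case where $N$ has no freely acting element is, as you yourself acknowledge, only a list of things one would try: the reduction to an action of the locally subexponential quotient $Q$ on a collapsed line requires constructing that quotient line, verifying that the image of $T$ still acts on it without fixed points, and pulling the resulting measure back to $\mathbb R$, none of which is carried out. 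As written, the argument establishes only the base case; the extension step, which is the entire content of the theorem, is not proved.
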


\begin{cor}\label{c1}
Every solvable group belongs to the class $\mathcal S$. For a solvable group with a freely acting element there exists a projectively invariant measure.
$\blacksquare$
\end{cor}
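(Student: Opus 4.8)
The plan is to use the derived series of a solvable group as the normal filtration required by the definition of $\mathcal{S}$, and then to read off the second assertion directly from Theorem~\ref{t3}. So let $G$ be solvable and set $G^{0}=G$, $G^{i+1}=[G^{i},G^{i}]$; solvability means exactly that $G^{r+1}=\langle e\rangle$ for some finite $r$. Each $G^{i+1}$ is a characteristic, hence normal, subgroup of $G^{i}$, so
$$
\langle e\rangle = G^{r+1}\lhd G^{r}\lhd\cdots\lhd G^{1}\lhd G^{0}=G
$$
is a finite normal filtration of the shape demanded in the definition of $\mathcal{S}$. It then remains only to check that each factor is locally subexponential.

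For this, note that by construction every factor $G^{i}/G^{i+1}$ is abelian. Hence any finitely generated subgroup of $G^{i}/G^{i+1}$ is a finitely generated abelian group, so by the structure theorem it is isomorphic to $\mathbb{Z}^{n}\oplus F$ for some $n\geq 0$ and some finite group $F$. Such a group has polynomial growth of degree $n$, which is in particular subexponential. Thus every factor of the derived series is locally subexponential, and therefore $G\in\mathcal{S}$; this proves the first statement.

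For the second statement, let $G\subseteq Homeo_+(\mathbb{R})$ be solvable and contain a freely acting element $\bar g$, i.e.\ $Fix~\bar g=\emptyset$. By the first part $G\in\mathcal{S}$, and $G$ acts on $\mathbb{R}$ by orientation-preserving (in particular order-preserving) homeomorphisms, with the fixed-point-free element $\bar g$ playing the role of $T$. Theorem~\ref{t3} then applies verbatim and produces a Radon measure on $\mathbb{R}$ that is projectively invariant under $G$, as claimed.

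I do not expect any genuine obstacle here: the corollary is essentially a bookkeeping consequence of the definition of $\mathcal{S}$ together with Theorem~\ref{t3}. The one place deserving a word of care is the implication ``abelian $\Rightarrow$ locally subexponential'', where it is crucial that only the \emph{finitely generated} subgroups need to have subexponential growth; an arbitrary abelian group need not be finitely generated, but every finitely generated one is handled by the structure theorem. It is also worth remarking that the same derived-series argument, padded at the top by a finite quotient, shows more generally that every virtually solvable group lies in $\mathcal{S}$.
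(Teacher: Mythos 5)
Your proof is correct and follows the only natural route: the paper itself gives no written proof of this corollary, treating it as an immediate consequence of the definition of $\mathcal S$ (via the derived series, whose abelian factors are locally subexponential since finitely generated abelian groups have polynomial growth) together with Theorem~\ref{t3}. Your writeup simply supplies these routine details, so it matches the paper's intent exactly.
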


\begin{rem}\label{z2}
There is the embedding $\mathcal S \subseteq \mathcal L$.
\end{rem}
\begin{proof}
We prove by contradiction. Let there be an element $G \in \mathcal S \backslash \mathcal L$. By definition, for such a group $G$ there exists a subgroup $\Lambda=<p,q>$ with two generators described in Theorem~\ref{t2}, and by Theorem~\ref{t2} for the group $G$ there is no projectively invariant measures. On the other hand, since $G \in \mathcal S$ and there exists a freely acting element for it, by Theorem~\ref{t3} for such a group there exists a projectively invariant measure. This contradiction proves the statement.
\end{proof}

The criteria for the existence of metric invariants can be reformulated in terms of semi-conjugacy, the definition of which is given below.

\begin{definition}
Let groups $G, {}_*G\subseteq Homeo_+(\mathbb R)$, are given. The group $G$ is called {\it semi-conjugate} to the group ${}_*G$ if there exists an orientation-preserving (monotonically increasing) map $\eta:\mathbb R\longrightarrow\mathbb R$ with an image consisting of more than one point, and there exists an epimorphism $\eta^\sharp:G\longrightarrow{}_*G$ such that for any $q\in G$ the diagram
$$
\begin{CD}
\mathbb R @>{}_*g=\eta^\sharp(g)>> \mathbb R\\
@A\eta AA @AA\eta A\\
\mathbb R @>>g> \mathbb R
\end{CD}
$$
is commutative, i.e. $\eta^\sharp(g)\eta=\eta g$.
$\blacksquare$
\end{definition}

In particular, the criterion for the existence of a projectively invariant measure can be reformulated in terms of semi-conjugacy.

\begin{theorem}[\citep{Beklaryan.2004_RMS}]\label{t4}
Let $G\subseteq Homeo_+(\mathbb R)$. Then the following statements are equivalent:
\begin{itemize}
\item[{(1)}] there exists a Borel measure $\mu$ that is finite on compact sets (Radon measure) and projectively invariant with respect to the group $G$;
\item[{(2)}] the group $G$ is semi-conjugate to some group ${}_*G\subseteq Homeo_+(\mathbb R)$ of affine transformations of the line that preserve orientation.
\end{itemize}
$\blacksquare$
\end{theorem}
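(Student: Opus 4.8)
The plan is to prove the two implications by explicit constructions, using in both directions the elementary fact that on the group $Aff_+(\mathbb R)$ of orientation-preserving affine maps $x\mapsto ax+b$ ($a>0$) Lebesgue measure $dx$ is projectively invariant, with multiplier $a^{-1}$.

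\emph{$(2)\Rightarrow(1)$.} Given a monotone (non-decreasing) semi-conjugacy $\eta\colon\mathbb R\to\mathbb R$ and an epimorphism $\eta^\sharp\colon G\to{}_*G\subseteq Aff_+(\mathbb R)$ with $\eta^\sharp(g)\,\eta=\eta\,g$, I would take $\mu$ to be the Lebesgue--Stieltjes measure of $\eta$ (of its right-continuous representative), so that $\mu((s,t])=\eta(t+)-\eta(s+)$; this $\mu$ is Radon because $\eta$ is finite everywhere, and it is nonzero because the image of $\eta$ has more than one point. Writing ${}_*g=\eta^\sharp(g)$ and composing the conjugacy relation with ${}_*g^{-1}$ on the left and $g^{-1}$ on the right yields the pointwise identity $\eta\circ g^{-1}={}_*g^{-1}\circ\eta$; since $g^{-1}$ is an increasing homeomorphism and ${}_*g^{-1}$ is continuous, this passes to right-limits, $\eta(g^{-1}(t)+)={}_*g^{-1}(\eta(t+))$. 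Hence, using $\mu(g^{-1}((s,t]))=\mu((g^{-1}(s),g^{-1}(t)])$, if ${}_*g(x)=ax+b$ then
\[
g_*\mu((s,t])=\eta(g^{-1}(t)+)-\eta(g^{-1}(s)+)=a^{-1}\bigl(\eta(t+)-\eta(s+)\bigr)=a^{-1}\mu((s,t]),
\]
so $g_*\mu=c_g\mu$ with $c_g=a^{-1}>0$, i.e. $\mu$ is projectively invariant.

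\emph{$(1)\Rightarrow(2)$.} Conversely, let $\mu\neq0$ be a Radon measure with $g_*\mu=c_g\mu$, $c_g>0$. From $(gh)_*\mu=g_*(h_*\mu)$ one gets $c_{gh}=c_gc_h$, and from $\mu(A)=\mu(g^{-1}(g(A)))=c_g\mu(g(A))$ one gets $\mu(g(A))=c_g^{-1}\mu(A)$ for every Borel $A$. Fix $t_0\in\mathbb R$ and set $\eta(t)=\mu((t_0,t])$ for $t\ge t_0$ and $\eta(t)=-\mu((t,t_0])$ for $t<t_0$; then $\eta$ is non-decreasing, right-continuous, and its image is a nondegenerate interval since $\mu\neq0$. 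For $g\in G$ and any $t$, the identity $(t_0,g(t)]=g((g^{-1}(t_0),t])$, additivity of $\mu$, and $\mu(g(A))=c_g^{-1}\mu(A)$ give, after a short case analysis on the relative order of $t_0$, $g^{-1}(t_0)$, $t$,
\[
\eta(g(t))=c_g^{-1}\bigl(\eta(t)-\eta(g^{-1}(t_0))\bigr).
\]
Thus the orientation-preserving affine map ${}_*g(x)=c_g^{-1}x-c_g^{-1}\eta(g^{-1}(t_0))$ satisfies ${}_*g\,\eta=\eta\,g$. Setting $\eta^\sharp(g)={}_*g$ and ${}_*G=\eta^\sharp(G)\subseteq Aff_+(\mathbb R)$, for $g,h\in G$ we have ${}_*(gh)\circ\eta=\eta\circ(gh)={}_*g\circ{}_*h\circ\eta$, so the two affine maps ${}_*(gh)$ and ${}_*g\cdot{}_*h$ agree on the image of $\eta$, which has more than one point, and therefore coincide; hence $\eta^\sharp$ is a homomorphism, surjective onto ${}_*G$ by construction, and the required square commutes.

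I expect the only genuinely delicate point to be the measure-theoretic bookkeeping at discontinuities and outside the support of $\mu$: in $(2)\Rightarrow(1)$ one must justify transporting the pointwise relation $\eta g^{-1}={}_*g^{-1}\eta$ to the right-continuous representative, and in $(1)\Rightarrow(2)$ one must track signs and half-open versus closed intervals so that each atom of $\mu$ is counted exactly once. No injectivity of $\eta$ is needed, because the intervals on which $\eta$ is locally constant are precisely the $\mu$-null ones, where the intertwining relation is vacuous. One should also make explicit the standing convention that "projectively invariant measure" means a nonzero one; otherwise statement $(1)$ holds trivially for every $G$ and the equivalence fails.
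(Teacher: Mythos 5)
Your proof is correct and is the standard argument (Stieltjes measure of the semi-conjugacy in one direction, the "distribution function" $\eta(t)=\pm\mu(\,(t_0,t]\,)$ of the measure in the other); the paper itself gives no proof of Theorem~\ref{t4}, merely importing it from \citep{Beklaryan.2004_RMS}, so there is nothing internal to compare against. The only nitpick is that in $(1)\Rightarrow(2)$ the image of $\eta$ need not be an interval when $\mu$ has atoms or gaps in its support --- but all that is used is that it contains more than one point, which follows from $\mu\neq 0$, so the argument stands.
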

It was also noted there that, in terms of semi-conjugacy, the canonical subgroup $H_G$ coincides with the kernel of the homomorphism $\eta^\sharp$, i.e. $ker~\eta^\sharp=H_G$, and the quotient group $G/H_G$ is isomorphic to the group of affine transformations of the line ${}_*G$, i.e. ${}_*G\simeq G/H_G$. The group of affine transformations is {\it a solvable group of the solvability length of at most two}.

\begin{cor}\label{c2}
For any group $G$ with a projectively invariant measure and, in particular, for any solvable group $G$ with a freely acting element the quotient group $G/H_G$ is a solvable group of the length of at most two (metabelian group).
\end{cor}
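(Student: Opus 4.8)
The plan is to deduce the corollary directly from Theorem~\ref{t4} together with the identification of $H_G$ recorded immediately after it. First I would treat the general case: suppose $G \subseteq Homeo_+(\mathbb R)$ admits a projectively invariant Radon measure. By the equivalence (1)$\Leftrightarrow$(2) in Theorem~\ref{t4}, $G$ is then semi-conjugate to some subgroup ${}_*G \subseteq Homeo_+(\mathbb R)$ of orientation-preserving affine transformations of the line, realized through an orientation-preserving map $\eta$ with non-degenerate image and an epimorphism $\eta^\sharp : G \to {}_*G$.

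Next I would invoke the observation stated after Theorem~\ref{t4}: $\ker \eta^\sharp = H_G$, and since $\eta^\sharp$ is surjective the first isomorphism theorem gives $G/H_G \simeq {}_*G$. It therefore suffices to check that ${}_*G$ is metabelian. As ${}_*G$ is a subgroup of the full affine group $\mathrm{Aff}_+(\mathbb R)=\{x\mapsto ax+b:\ a>0,\ b\in\mathbb R\}$, and subgroups of metabelian groups are metabelian, it is enough to observe that $\mathrm{Aff}_+(\mathbb R)$ is itself solvable of length at most two: its commutator subgroup lies in the abelian group of translations, and the corresponding quotient is the abelian group $(\mathbb R_{>0},\cdot)$. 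Hence $G/H_G$ is metabelian.

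For the ``in particular'' clause I would appeal to Corollary~\ref{c1}: every solvable group belongs to the class $\mathcal S$, so a solvable $G$ possessing a freely acting element satisfies the hypotheses of Theorem~\ref{t3} and thus carries a projectively invariant Radon measure; applying the general case already proved to this $G$ then finishes the argument.

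I do not anticipate a genuine obstacle here: the statement is essentially a repackaging of Theorem~\ref{t4} and the equality $\ker\eta^\sharp=H_G$. The only point needing a little care is to make sure the semi-conjugacy supplied by Theorem~\ref{t4} really yields an \emph{epimorphism} onto ${}_*G$, so that $G/H_G \simeq {}_*G$ holds literally rather than merely up to an embedding into $\mathrm{Aff}_+(\mathbb R)$ — but this is already built into the paper's definition of semi-conjugacy, so it requires no extra work.
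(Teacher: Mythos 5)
Your proposal is correct and follows exactly the paper's own route: Corollary~\ref{c1} supplies the projectively invariant measure in the solvable case, Theorem~\ref{t4} gives the semi-conjugacy to an affine group, and the identity $\ker\eta^\sharp=H_G$ yields $G/H_G\simeq{}_*G$, which is metabelian as a subgroup of the affine group. You merely spell out the details (first isomorphism theorem, metabelianity of $\mathrm{Aff}_+(\mathbb R)$) that the paper leaves implicit.
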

\begin{proof}
By Corollary~\ref{c1}, for such a solvable group there exists a projectively invariant measure. Then the statement for the quotient group follows from Theorem~\ref{t4} and from the property that $ker~\eta^\sharp=H_G$.
\end{proof}

\bibliography{Beklaryan}

\begin{thebibliography}{4}
\providecommand{\natexlab}[1]{#1}
\providecommand{\url}[1]{\texttt{#1}}
\expandafter\ifx\csname urlstyle\endcsname\relax
  \providecommand{\doi}[1]{doi: #1}\else
  \providecommand{\doi}{doi: \begingroup \urlstyle{rm}\Url}\fi

\bibitem[Beklaryan(2004)]{Beklaryan.2004_RMS}
L.~A. Beklaryan.
\newblock Groups of homeomorphisms of the line and the circle. {T}opological
  characteristics and metric invariants.
\newblock \emph{Russian Mathematical Surveys}, 59\penalty0 (4):\penalty0
  599--660, 2004.
\newblock \doi{10.1070/RM2004v059n04ABEH000758}.

\bibitem[Beklaryan(2014)]{Beklaryan.2014_Sb}
L.~A. Beklaryan.
\newblock Groups of homeomorphisms of the line. {C}riteria for the existence of
  invariant and projectively invariant measures in terms of the commutator
  subgroup.
\newblock \emph{Sbornik: Mathematics}, 205\penalty0 (12):\penalty0 1741--1760,
  2014.
\newblock \doi{10.1070/SM2014v205n12ABEH004437}.

\bibitem[Guelman and Rivas(2016)]{Guelman.2016}
N.~Guelman and C.~Rivas.
\newblock Quasi-invariant measures for some amenable groups acting on the line.
\newblock \emph{arXiv e-prints}, 2016.
\newblock URL \url{https://arxiv.org/pdf/1607.05310v1.pdf}.

\bibitem[Solodov(1983)]{Solodov.1983}
V.~V. Solodov.
\newblock Homeomorphisms of the line and a foliation.
\newblock \emph{Math. USSR Izv.}, 21\penalty0 (2):\penalty0 341--354, 1983.
\newblock \doi{10.1070/IM1983v021n02ABEH001794}.

\end{thebibliography}

\end{document}